\documentclass{amsart}

\usepackage{amssymb, amsmath, latexsym, a4}
\usepackage[latin1]{inputenc}
\usepackage[all]{xy}
\usepackage{anysize}

\marginsize{3.5 cm}{3.5 cm}{2.5 cm}{2.5 cm}

\newtheorem{De}{Definition}
\newtheorem{Th}[De]{Theorem}

\newtheorem{Ex}[De]{Example}

\let\To\Rightarrow

\def\xto#1{\xrightarrow[]{#1}}
\def\ta{{\mathcal T}}
\def\ka{{\mathcal K}}
\def\ca{{\mathcal C}}

\newcommand\bI{{\boldsymbol I}}
\newcommand\F{{\boldsymbol F}}

\def\id{\sf Id}
\let\toto\rightrightarrows

\begin{document}

\title{Schreier theory of track categories}

\author[M. Pirashvili]{Mariam  Pirashvili}

\maketitle

\section{Introduction}

This paper is a continuation of our study of non-abelian Baues-Wirsching cohomologies. In our previous paper \cite{mp}, we defined second non-abelian cohomology $H^2(\ca, D)$ of a small category $\ca$ with coefficients in a so-called centralised natural system $D$. We proved that $H^2(\ca, D)$ classifies linear extensions of $\ca$ by $ D$, generalising the corresponding result for abelian natural systems proved in \cite{bw}.

For an abelian natural system $D$, the third cohomology classifies certain abelian track categories \cite{bj}. A track category is a $2$-category where all $2$-morphisms are isomorphisms. A track category is called abelian if for every $1$-morphism $f$, the group $Aut(f)$ is abelian.

In a similar fashion to the above, we want to generalise this result for non-abelian track categories. In this paper we solve this problem for the folowing important case: Given categories $\ka$ and $\ca$ and a functor $\pi: \ka\to\ca$, which is identity on objects and surjective on morphisms, and $G$, a centralised natural system of groups on $\ka$, we describe the equivalence classes of all track categories $\ta$ for which $\ka$ is the underlying category and $\ca$ is the homotopy category and $G_f=Aut(f)$.

\section{Preliminaries}
\subsection{Track categories}
For a small groupoid $\bf G$, the set of its objects will be denoted by ${\bf G}_0$ and the set of morphisms by ${\bf G}_1$. We have the source and target maps $
\xymatrix{
{\bf G}_1\ar@<1ex>[r]^s\ar@<-1ex>[r]_t&\ {\bf G}_0
}
$. A groupoid is called \emph{abelian} if all automorphism groups are abelian.

\begin{De}
A category enriched in groupoids $\ta$, also termed \emph{track category}, is a $2$-category whose all $2$-cells are invertible. It is thus a class of objects $Ob(\ta)$, a collection of groupoids $\ta(A,B)$ for $A,B\in Ob(\ta)$ called \emph{hom-groupoids of $\ta$}, identities $1_A\in \ta(A,A)_0$ and composition functors $\ta(B,C)\times \ta(A,B)\to \ta(A,C)$ satisfying the usual equations of associativity and identity morphisms.
\end{De}

Thus  $\ta(A,B)_0$  denotes the set of objects of the groupoid $\ta(A,B)$, while  $\ta(A,B)_0$ denotes the set morphisms of the groupoid $\ta(A,B)$. Elements of  $\ta(A,B)_0$ we will call morphisms of  $\ta$, while elements of  $\ta(A,B)_1$ are called tracks of $\ta$. The underlying category of $\ta$ is denoted by $\ka$ 
%and the homotopy category of $\ta$ is denoted by $\ca$.

For $f,g\in\ta(A,B)_1$ we shall write $f\simeq g$ (and say $f$ is \emph{homotopic} to $g$) if there exists a morphism $\alpha$ of $\ta$ from  $f$ to $g$. Occassionally this will also be denoted by $\alpha :f\simeq g$ or $\alpha :f\To g$. Sometimes $\alpha$ is called a \emph{homotopy} or a \emph{track} from $f$ to $g$. Homotopy is a natural equivalence relation on morphisms of $\ka$ and determines the homotopy category $\ca=\ka/\simeq$. Objects of $\ca$ are once again the objects in $Ob(\ta)$, while the morphisms of $\ca$ are the homotopy classes of morphisms in $\ka$. We denote the canonical functor by $\pi$. So $\pi:\ka\to\ca$.

\

A morphism $g:B\to C$ in $\ta$  induces the functors
$$g_*:\ta (A,B)\to \ta (A,C), \ \ f\mapsto gf, \ \ \alpha\mapsto g_*\alpha,$$
$$g^*: \ta(C,D)\to \ta(B,D), \ \ h\mapsto hg, \ \ \beta\mapsto g^*\beta.$$
These functors are restrictions of the composition functors. It follows from the definition that the following relations hold:
$$(\alpha +\beta)+\gamma=\alpha +(\beta+\gamma),\leqno {\rm TR \ 1}$$
$$\alpha +0=\alpha=0+\alpha,   \leqno {\rm TR \ 2}$$
$$f^*(\alpha +\beta)=f^*(\alpha) +f^*(\beta),\leqno {\rm TR \ 3} $$
$$g_*(\alpha +\beta)=g_*(\alpha) +g_*(\beta),\leqno {\rm TR \ 4}$$
$$f^*(0)=0=g_*(0),\leqno {\rm TR \ 5}$$
$$(ff_1)^*=f_1^*f^*, \ 1^*=1,\leqno {\rm TR \ 6}$$
$$(gg_1)_*=g_*g_{1*}, \ 1_*=1,\leqno {\rm TR \ 7}$$
$$g_*f^*=f^*g_*,\leqno {\rm TR \ 8}$$
$$g_*(\alpha )+f_1^*(\alpha _1)=f^*(\alpha _1)+ g_{1*}(\alpha). \leqno {\rm TR \ 9}$$
The following diagram explains the 1-morphisms and $2$-morphisms in TR 9:
$$\xymatrix{A\ar@/^/[r]^{f} \ar@/_/[r]_{f_1}&B\ar@/^/[r]^{g}\ar@/_/[r]_{g_1} & C}, \ \ \alpha:f\To f_1, \ \ \alpha_1:g \To g_1. $$
The equality TR 9 holds in $\ta(gf,g_1f_1)$.

\begin{De}
A \emph{track functor}, or else $2$-functor, $F:\ta\to \ta'$ between track categories is a groupoid enriched functor. Thus $F$ assigns to each $A\in Ob(\ta)$ an object $F(A)\in Ob(\ta')$, to each map $f:A\to B$ in $\ta$ a map $F(t):F(A)\to F(B)$, and to each track $\alpha:f\To g$ for $f,g:A\to B$ a track $F(\alpha):F(f)\To F(g)$ in a functorial way, i.e. so that one gets functors $F_{A,B}:\ta(A,B)\to \ta'(F(A),F(B))$. These assignments are compatible with identities and composition, or equivalently induce a functor $\ta_1\to \ta'_1$, that is $F(1_A)=1_{F(A)}$ for $A\in Ob(\ta)$, $F(fg)=F(f)F(g)$, and $F(\phi\times\psi)=F(\phi)\times(F\psi)$ for any $\phi: f\To f'$, $\psi: g\To g',$, $f,f':B\to C$, $g,g':A\to B$ in $\ta$.
\end{De}

A track functor $F:\ta \to \ta'$ is called a \emph{weak equivalence} between track categories if the functors $\ta(A,B)\to \ta'(F(A),F(B))$ are equivalences of groupoids for all objects $A,B$ of $\ta$ and each object $A'$ of $\ta'$ is homotopy equivalent to some object of the form $F(A)$. Such a weak equivalence induces a functor $F: \ca\to\ca'$ between homotopy categories which is an equivalence of categories.

\

\subsection{Natural systems}  For a category $\bI$, one denotes by $\F\bI$ the
\emph{category of factorizations of} $\bI$. Let us recall that objects of the category $\F\bI$ are morphisms $f:i\to j$
of $\bI$. A morphism from $f$ to $f':i'\to j'$ in $\F\bI$ is a pair $(g,h)$, where $g:i'\to i$ and $h:j\to j'$ are morphisms in $\bI$ such that
$$f'=h\circ f\circ g.$$
In other words, the following diagram
$$\xymatrix{
j\ar[r]^h&j'\\
i\ar[u]^f&k\ar[l]^g\ar[u]_{f'} }$$
commutes. Composition in $\F\bI$ is defined by
$$(g',h')(g,h) = (gg',h'h).$$
It is clear that
$$(g ,h )=(g,{\id}_{j'})({\id}_i,h ) =({\id}_{i'},h)(g,{\id}_j).$$

\

Let $\bI$ be a category. A \emph{natural system} on $\bI$ with values in a category $\bf C$ is a functor $D:\F\bI\to {\bf C}$. We  denote the value of $D$ on $f:i\to j$ by $D_f$ or  $D(f)$. If $f$ is the identity ${\id}_i:i\to i$ we write $D_i$ instead of $D_{{\id}_i}$. We also write $g^*$ and $h_*$ instead of $D(g, {\id})$ and $D({\id},h)$. Then $$D(g, h)=g^*h_*=h_*g^*:D_f\to D_{hfg}.$$
If ${\bf C}$ is the category of sets; respectively groups, or abelian groups;  we say that $D$ is a natural system of  sets; respectively groups, or abelain groups.

\

 Let $\bI$ be a small category.  A \emph{centralised natural system of groups} is a natural system $D$ of groups on $\bI$ such that for any arrows $i\xto{f} j\xto{g} k$ and elements $x\in D_f$, $y\in D_g$ one has the equality
$$g_*(x)+f^*(y)=f^*(y)+g_*(x)$$
in the group $D_{gf}$.  By putting $i=j=k$ and $g=f=id_i$, it follows that for any object $i$ the group $D_i$ is an abelian group.

\

\subsection{Natural system $Aut^\ta$} Let $\ta$ be  a track category. Recall that the underlying category $\ka$  has the same objects as ${\ta}$, however the set of morphisms $Hom_{\ka}(A,B)$ is the set of  objects of the category ${\ta}(A,B)$.  
%Following \cite{bj} we define the natural system $Aut^\ta$ on $\ka$ as follows. 
 For any morphism $f:A\to B$ of ${\ka}$ we let $Aut^\ta_f$ be  the collection of all automorphisms of $f$ in the category ${\ta}(A,B)$. Thus, this is the collection of all tracks $\alpha:f\To f$. It follows from TR 1  and TR 2 that $D_f$ is a group. Moreover, for any morphism $g:B\to C$, we have maps $g_*:Aut^\ta_f\to Aut^\ta_{gf}$ and $f^*:Aut^\ta_g\to Aut^\ta_{gf}$, which are group homomorphisms thanks to TR 3 -- TR 5. Moreover, in this way one obtains a natural system $Aut^\ta$ of groups on $\ka$. This follows from the identities TR 6  --  TR 8. We claim that $Aut^\ta$ is centralised. To show this it suffices to put $f=f_1$ and $g=g_1$ in TR 9 to get:
$$g_*(\alpha )+f^*(\alpha _1)=f^*(\alpha _1)+ g_{*}(\alpha).$$

\section{Cohomology of pre-track categories}
\subsection{Pre-track categories}
A pre-track category $(\pi:\ka\to\ca,G)$, or $(\pi,G)$ for short, is the following data:
\begin{enumerate}
\item $\ka$ and $\ca$ are categories and $\pi$ is a functor which is identity on objects and surjective on morphisms,
\item $G$ is a centralised natural system of groups on $\ka$.
\end{enumerate}

\begin{Ex}
The main example for this is the following. Let $\ta$ be a track category and let $\ca$ be its homotopy category. Then $(\pi:\ka\to\ca,Aut^\ta)$ is a pre-track category.
\end{Ex}

So any track category gives rise to a pre-track category. The question is whether we can say something about the converse.

\begin{De}
Let $(\pi:\ka\to\ca,G)$ be a pre-track category. A $(\pi,G)$-track category is a pair $(\ta, \sigma)$ where
\begin{enumerate}
\item $\ta=(T\ka\toto \ka)$ is a track category with underlying category $\ka$ satisfying the property: for all $f,g\in\ka(A,B)$ one has $\pi(f )=\pi(g)$ iff $\ta(f,g)\neq\emptyset$.
\item $\sigma$ is an isomorphism of natural systems on $\ka$
$$\sigma:Aut^\ta\to G.$$
\end{enumerate}
\end{De}
Two $(\pi,G)$-track categories $(\ta,\sigma)$ and $(\ta',\sigma')$ are equivalent if there exists a track functor $F:\ta\to\ta'$ which is identity on objects and the following diagram commutes:
$$\xymatrix{Aut^{\ta} \ar[rr]^{F}\ar[rd]_{\sigma} & & Aut^{\ta'}\ar[ld]^{\sigma'}\\ & G & }$$
One easily sees that if such an $F$ exists, then it is an isomorphism of $2$-categories. We let $Tracks(\pi,G)$ be the collection of equivalence classes of $(\pi,G)$-track extensions.

\subsection{Second cohomology of pre-track categories}
Given a pre-track category $(\pi:\ka\to\ca,G)$ we can also define the relative cohomology $H^2(\pi,G):=Z/_\sim$ where $Z$ is defined as follows.

\begin{De}\label{zzz} $Z$ is a collection of triples all $(\xi,\chi,\phi)$ such that
\begin{enumerate}
\item $\xi$ is a function which assigns to all triples of morphisms
$f,g,h:i\to j$ of $\ka$ with $\pi(f)=\pi(g)=\pi(h)$, an element $\xi(f,g,h)\in G_f$,
\item $\chi$ is a function which assigns to each diagram $\xymatrix{i\ar@<1ex>[r]^f\ar@<-1ex>[r]_g&\ j\ar@<1ex>[r]^x\ar@<-1ex>[r]_y&\ k}$ with $\pi(f)=\pi(g)$ and $\pi(x)=\pi(y)$ an element $\chi(f,g|x,y)\in G_{xf}$,
\item $\phi$ is a function which assigns to each pair of morphisms $f,g$ in $\ka$ with $\pi(f)=\pi(g)$ an isomorphism $\phi_{g,f}:G_g\to G_f$.
\end{enumerate}
\renewcommand{\theenumi}{\roman{enumi}}
These functions must satisfy the following equations:
\begin{enumerate}
\item To simplify notation, first we define $m_*\phi_{b,a}:G_{mb}\to G_{ma}$ by
$$(m_*\phi_{b,a})(t):=-\chi(a,b|m,m)+\phi_{mb, ma}(t)+\chi(a,b|m,m)$$
where $a,b:i\to j$ and $m:j\to k$. Also, we define $b^*\phi_{n,m}:G_{nb}\to G_{mb}$ by
$$(b^*\phi_{n,m})(t):=-\chi(b,b|m,n)+\phi_{nb,mb}(t)+\chi(b,b|m,n),$$
where $b:i\to j$ and $m,n:j\to k$. Then we have the following equations:
\begin{enumerate}\label{phi}
\item For $f,g,h:i\to j$, $\phi_{g,f}\circ\phi_{h,g}(t)=\xi(f,g,h)+\phi_{h,f}(t)+\xi(f,g,h)$.
\item For $a,b:i\to j$, $m,n:j\to k$, $\beta\in G_b$ and $\nu\in G_n$, $m_*\phi_{b,a}(m_*\beta)=m_*(\phi_{b,a}(\beta)),  \  \ a^*\phi_{n,m}(a^*\nu)=a^*(\phi_{n,m}(\nu))$,
\item and for $\mu\in G_m$ and $\alpha\in G_a$, $m_*\phi_{b,a}(b^*\mu)=a^*\mu, \  \ a^*\phi_{n,m}(n_*\alpha)=m_*\alpha$.
\end{enumerate}
\item \label{xi} For $f,g,h,e:i\to j$ with $\pi(f)=\pi(g)=\pi(h)=\pi(e)$, we have to have $$\xi(f,g,e)+\phi_{g,f}\xi(g,h,e)=\xi(f,h,e)+\xi(f,g,h).$$
\item \label{xichi} For $x,y,z:i\to j$ with $\pi(x)=\pi(y)=\pi(z)$ and $a,b,c:j\to k$ with $\pi(a)=\pi(b)=\pi(c)$, we have to have $$\xi(ax,by,cz)+\phi_{by,ax}(\chi(y,z|b,c))+\chi(x,y|a,b)=\chi(x,z|a,c)+x^*\xi(a,b,c)+a_*\xi(x,y,z).$$
\item \label{chi} For $x,y:i\to j$ with $\pi(x)=\pi(y)$, $a,b:j\to k$ with $\pi(a)=\pi(b)$ and $m,n:k\to l$ with $\pi(m)=\pi(n)$, we have to have
$$\chi(ax,by|m,n)+m_*\chi(x,y|a,b)=\chi(x,y|ma,nb)+x^*(\chi(a,b|m,n)).$$
\end{enumerate}
\end{De}
\begin{De}
We say two such triples $(\xi,\chi, \phi)$ and $(\xi ',\chi ',\phi ')$ are equivalent, if there is an element $\zeta(f,g)\in G_f$ such that
\begin{enumerate}\label{equi}
\item $\zeta(f,h)+\xi(f,g,h)-\phi_{g,f}\zeta(g,h)-\zeta(f,g)=\xi'(f,g,h)$,
\item $\zeta(ax,by)+\chi(x,y|a,b)-x^*\zeta(a,b)-a_*\zeta(x,y)=\chi'(x,y|a,b)$, and
\item $\zeta(f,g)+\phi_{g,f}(t)-\zeta(f,g)=\phi'_{g,f}(t).$
\end{enumerate}
\end{De}

\begin{Th}
There is a natural bijection
$$Tracks(\pi,G)\simeq H^2(\pi,G).$$
\end{Th}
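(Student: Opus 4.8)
The plan is to construct two mutually inverse maps, $\Phi:Tracks(\pi,G)\to H^2(\pi,G)$ and $\Psi:H^2(\pi,G)\to Tracks(\pi,G)$, and to verify that both are well-defined on equivalence classes and inverse to each other. The heart of the matter is a \emph{section-and-cocycle} construction: given a $(\pi,G)$-track category $(\ta,\sigma)$, one chooses, for every pair $f,g\in\ka(A,B)$ with $\pi(f)=\pi(g)$, a track $s_{g,f}:f\To g$ in $\ta$ (normalised so that $s_{f,f}=0$), and then reads off the triple $(\xi,\chi,\phi)$ from how these chosen tracks fail to be compatible with composition and with one another. Concretely one sets $\phi_{g,f}(t):=-s_{g,f}+\sigma^{-1}(t)+s_{g,f}$ transported back along $\sigma$ (an automorphism of $f$), defines $\xi(f,g,h)\in G_f$ by the equation $s_{g,f}+s_{h,g}=\xi(f,g,h)+s_{h,f}$ (using that the difference of two tracks $f\To h$ is an automorphism of $h$, hence via $\sigma$ an element of $G_h$, then transported to $G_f$), and defines $\chi(f,g|x,y)\in G_{xf}$ by comparing $x_*s_{g,f}+y^*s_{x,y}$ — wait, more precisely comparing the chosen track $s_{yg,xf}:xf\To yg$ with the composite $y^*(s_{x,y})\circ x_*(s_{g,f})$, reading the discrepancy as an element of $Aut^\ta_{yg}\cong G_{yg}$ and pushing it to $G_{xf}$.

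Next I would check that $(\xi,\chi,\phi)\in Z$, i.e.\ that the four axioms of Definition~\ref{zzz} hold. Each axiom corresponds to a ``pasting'' identity for tracks in $\ta$: axiom~(\ref{xi}) is the cocycle condition coming from the associativity of vertical composition of the $s_{g,f}$ over four parallel morphisms; axioms (\ref{xichi}) and (\ref{chi}) come from the interchange law TR~9 together with the functoriality axioms TR~3--TR~8 applied to $2\times 2$ and $3\times 3$ grids of parallel morphisms; and the sub-axioms (a)--(c) under item~(1) are exactly the statements that $\phi$, conjugated appropriately by $\chi$, is compatible with the natural-system structure maps $g_*$ and $f^*$ of $G$ (using that $\sigma$ is an isomorphism of natural systems, so these structure maps on $G$ agree with those on $Aut^\ta$). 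Changing the choice of section $s$ to another section $s'$ replaces $(\xi,\chi,\phi)$ by an equivalent triple, with $\zeta(f,g)\in G_f$ recording the difference $s'_{g,f}-s_{g,f}\in Aut^\ta_g\cong G_g\xrightarrow{\phi}G_f$; this is a direct computation matching the three equations of Definition~\ref{equi}. One also checks that an equivalence of $(\pi,G)$-track categories $F:\ta\to\ta'$ transports a section of $\ta$ to one of $\ta'$ giving the \emph{same} triple (since $F$ is identity on objects and commutes with $\sigma,\sigma'$), so $\Phi$ is well-defined on equivalence classes.

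For the inverse $\Psi$, given $(\xi,\chi,\phi)\in Z$ I would build a track category $\ta$ whose objects and $1$-morphisms are those of $\ka$, with $\ta(f,g):=G_f$ if $\pi(f)=\pi(g)$ and $\emptyset$ otherwise; vertical composition $\ta(g,h)\times\ta(f,g)\to\ta(f,h)$ defined using $\xi$ and $\phi$ (so that $\beta\star\alpha:=\alpha+\phi^{-1}_{?}(\cdots)$, arranged so that the identity track $f\To f$ is $0\in G_f$ and associativity is precisely axiom~(\ref{xi})); and horizontal composition defined via $\chi$, $\phi$, and the structure maps of $G$, with the interchange law TR~9 being precisely axiom~(\ref{chi}) and the middle compatibility with identities being axiom~(\ref{xichi}). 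The isomorphism $\sigma:Aut^\ta\to G$ is the identity on underlying sets by construction (one checks the conjugation built into vertical composition makes $Aut^\ta_f=\ta(f,f)=G_f$ as groups, and that the natural-system structure agrees, using sub-axioms (a)--(c)). Equivalent triples yield equivalent track categories via the track functor that is the identity on $\ka$ and sends $\alpha\in G_f=\ta(f,g)$ to $\alpha+\zeta(f,g)\in G_f=\ta'(f,g)$. Finally one verifies $\Phi\Psi=\id$ and $\Psi\Phi=\id$: for the former, the tautological section $s_{g,f}:=0\in G_f=\ta(f,g)$ of the constructed $\ta$ returns exactly the triple $(\xi,\chi,\phi)$ we started from; for the latter, given $(\ta,\sigma)$ with a section $s$, the track functor sending $\alpha\in\ta(f,g)\mapsto \sigma(\alpha - \text{(the }s\text{-part)})$ — i.e.\ $\alpha\mapsto\sigma(s_{g,f}^{-1}\cdot\alpha)$ read in $G_f$ — is an equivalence to the reconstructed track category.

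I expect the main obstacle to be bookkeeping: getting every conjugation by $\chi$ and transport by $\phi$ to land in the right group and to have the right variance, so that the four axioms literally fall out of TR~1--TR~9 rather than merely ``up to obvious identification.'' In particular the definitions of $m_*\phi_{b,a}$ and $b^*\phi_{n,m}$ in Definition~\ref{zzz}(1) are forced precisely so that horizontal composition of tracks in the reconstructed $\ta$ is well-defined and associative, and checking this — equivalently, checking that the horizontal composition functor is genuinely a functor and that TR~6--TR~8 hold — is the delicate computational core. Once those functoriality checks are in place, well-definedness on equivalence classes and the inverse-bijection statements are routine (if lengthy) diagram chases.
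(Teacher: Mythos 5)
Your proposal is correct and follows essentially the same route as the paper: choose a normalised section of tracks $H_{f,g}$, extract $(\xi,\chi,\phi)$ from the defects of vertical and horizontal composition and from conjugation by $H$, show a change of section yields an equivalent triple, and reconstruct the track category from a triple with $\ta(f,g)=G_f$ and composition twisted by $\xi$, $\chi$, $\phi$. The only slip is in your attribution of axioms in the reconstruction — in the paper's verification the interchange law TR~9 (and TR~3, TR~4) rests on axiom~(\ref{xichi}) while axiom~(\ref{chi}) handles TR~6--TR~8 — but this is a bookkeeping detail that would resolve itself in carrying out the computation.
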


%\begin{De}
%Let $\bf K$ and $\bf C$ be categories and let $\bf K\xrightarrow{\pi} \bf C$ be a functor which is identity on objects and surjective on morphisms. Let $G$ be a natural system of groups on $K$. A \emph{pre-track category} consists of 
%\end{De}
%
%There is a bijection between 

\begin{proof}
Let $(\ta,\sigma)$ be an element of $Tracks(\pi,G)$. Let $\pi(f)=\pi(g)$, so $\ta(f,g)\neq\emptyset$. Choose a track $H_{f,g}\in T(f,g)$ so that $$H_{f,f}=0, \  \  \  H_{f,g}=-H_{g,f}.$$
Now take $(f,g,h)$ in such a way that $\pi(f)=\pi(g)=\pi(h)$. Then there is a unique element $\xi(f,g,h)\in G_f$ such that
$$H_{f,g}+H_{g,h}=-\xi(f,g,h)+H_{f,h}.$$
Take $x,y\in K(i,j)$ and $a,b\in K(j,k)$ for which $\pi(x)=\pi(y)$ and $\pi(a)=\pi(b)$. Then there is a unique element $\chi(x,y|a,b)\in G_{ax}$ such that
$$a_*H_{x,y}+y^*H_{a,b}=-\chi(x,y|a,b)+H_{ax,by}.$$

For every $f$ and $g$ with $\pi(f)=\pi(g)$ we define an isomorphism $\phi_{g,f}(t):=H_{f,g}+t-H_{f,g}$, where $t\in T(g,g)$.

We claim that the triple $(\xi, \chi, \phi)$ is an element of $Z$.

The Equations (\ref{phi}) of Definition \ref{zzz} are straightforward to check. First, we check composition:

\begin{align*}
\phi_{g,f}\circ\phi_{h,g}(t)&=\phi_{g,f}(H_{g,h}+t-H_{g,h})=H_{f,g}+H_{g,h}+t-H_{g,h}-H_{f,g}\\
&=-\xi(f,g,h)+\phi_{h,f}(t)+\xi(f,g,h).
\end{align*}

Next, we have
\begin{align*}
m_*(\phi_{b,a}(\beta))&=m_*(H_{a,b}+\beta-H_{a,b})=m_*H_{a,b}+m_*\beta-m_*H_{a,b}\\
&=(m_*\phi_{b,a})(m_*\beta).
\end{align*}
Similarly,
\begin{align*}
a^*(\phi_{n,m}(\nu))&=a^*(H_{m,n}+\nu-H_{m,n})=(a^*\phi_{n,m})(a^*\nu).
\end{align*}

The next equation
$$m_*\phi_{b,a}(b^*\mu)=m_*H_{a,b}+b^*\mu-m_*H_{a,b}=a^*\mu$$
follows from $TR9$, as shown by the diagram
$$\xymatrix{i\ar@/^/[r]^{a} \ar@/_/[r]_{b}&j\ar@/^/[r]^{m}\ar@/_/[r]_{m} & k}, \ \ H_{a,b}:a\To b, \ \ \mu:m \To m.$$
The final equation is very similar to this.

To check (\ref{xi}), we use the associativity property for the expression $H_{f,g}+H_{g,h}+H_{h,e}$ and the definition of the function $\xi$ to obtain
$$(H_{f,g}+H_{g,h})+H_{h,e}=-\xi(f,g,h)+H_{f,h}+H_{h,e}=-\xi(f,g,h)-\xi(f,h,e)+H_{f,e}$$
$$H_{f,g}+(H_{g,h}+H_{h,e})=H_{f,g}-\xi(g,h,e)+H_{g,e}=H_{f,g}-\xi(g,h,e)-H_{f,g}-\xi(f,g,e)+H_{f,e}.$$
So we have
$$\xi(f,h,e)+\xi(f,g,h)=\xi(f,g,e)+\phi_{g,f}(\xi(g,h,e)).$$

Let $x,y,z:i\to j$ and $a,b,c:j\to k$ be morphisms in $K$ such that $\pi(x)=\pi(y)=\pi(z)$ and $\pi(a)=\pi(b)=\pi(c)$. Then we have
\begin{align*}
a_*(H_{x,y}+H_{y,z})&+ z^*(H_{a,b}+H_{b,c})\\
=a_*H_{x,y}+(a_*H_{y,z}&+z^*H_{a,b})+z^*H_{b,c}\\
=a_*H_{x,y}+(y^*H_{a,b}&+b_*H_{y,z})+z^*H_{b,c}\\
=-\chi(x,y|a,b)+H_{ax,by}&-\chi(y,z|b,c)+H_{by,cz}\\
=-\chi(x,y|a,b)+H_{ax,by}&-\chi(y,z|b,c)-H_{ax,by}+H_{ax,by}+H_{by,cz}\\
=-\chi(x,y|a,b)+H_{ax,by}&-\chi(y,z|b,c)-H_{ax,by}-\xi(ax,by,cz)+H_{ax,cz}.\\
\end{align*} 

On the other hand,
\begin{align*}
&a_*(H_{x,y}+H_{y,z})+ z^*(H_{a,b}+H_{b,c})\\
=&a_*(-\xi(x,y,z)+H_{x,z})+ z^*(-\xi(a,b,c)+H_{a,c})\\
=&-a_*\xi(x,y,z)+a_*H_{x,z}- z^*\xi(a,b,c)-a_*H_{x,z}+a_*H_{x,z}+z^*H_{a,c}\\
=&-a_*\xi(x,y,z)+a_*H_{x,z}- z^*\xi(a,b,c)-a_*H_{x,z}-\chi(x,z|a,c)+H_{ax,cz}\\
=&-a_*\xi(x,y,z)-a_*\phi_{z,x}( z^*\xi(a,b,c))-\chi(x,z|a,c)+H_{ax,cz}\\
=&-a_*\xi(x,y,z)- x^*\xi(a,b,c)-\chi(x,z|a,c)+H_{ax,cz} \ \ {\rm using} \ {\rm Equation} \ {\rm (\ref{phi})}.
\end{align*}

We therefore obtain Equation (\ref{xichi}).
$$\xi(ax,by,cz)+\phi_{by,ax}(\chi(y,z|b,c))+\chi(x,y|a,b)=\chi(x,z|a,c)+x^*\xi(a,b,c)+a_*\xi(x,y,z).$$

Let $x,y:i\to j$, $a,b:j\to k$ and $m,n:k\to l$ be morphisms in $K$ such that $q(x)=q(y)$, $q(a)=q(b)$ and $q(m)=q(n)$. By the definition of $\chi$ we have
$$m_*H_{a,b}+b^*H_{m,n}=-\chi(a,b|m,n)+H_{ma,nb}$$
and
$$m_*a_*H_{x,y}+y^*H_{ma,nb}=-\chi(x,y|ma,nb)+H_{max,nby}.$$

Expressing $H_{ma,nb}$ from the first equation and substituting in the second, we obtain that the expression $-\chi(x,y|ma,nb)+H_{max,nby}$ is equal to
\begin{align*}
&m_*a_*H_{x,y}+y^*(\chi(a,b|m,n))+y^*m_*H_{a,b}+y^*b^*H_{m,n}\\
=&m_*a_*H_{x,y}+y^*(\chi(a,b|m,n))-m_*a_*H_{x,y}+m_*a_*H_{x,y}+y^*m_*H_{a,b}+y^*b^*H_{m,n}\\
=&m_*a_*H_{x,y}+y^*(\chi(a,b|m,n))-m_*a_*H_{x,y}+m_*(a_*H_{x,y}+y^*H_{a,b})+y^*b^*H_{m,n}\\
=&m_*a_*H_{x,y}+y^*(\chi(a,b|m,n))-m_*a_*H_{x,y}-m_*\chi(x,y|a,b)+m_*H_{ax,by}+y^*b^*H_{m,n}\\
=&m_*a_*H_{x,y}+y^*(\chi(a,b|m,n))-m_*a_*H_{x,y}-m_*\chi(x,y|a,b)-\chi(ax,by|m,n)+H_{max,nby}\\
=&x^*(\chi(a,b|m,n))-m_*\chi(x,y|a,b)-\chi(ax,by|m,n)+H_{max,nby},\ {\rm using} \ {\rm Equation} \  {\rm (\ref{phi})}.
\end{align*}
Therefore we have
$$\chi(ax,by|m,n)+m^*\chi(x,y|a,b)=\chi(x,y|ma,nb)+x^*(\chi(a,b|m,n)).$$

If we choose another $H_{f,g}'\in \ta(f,g)$, there is a unique element $\zeta(f,g)\in G_f$ such that
$$\zeta(f,g)+H_{f,g}=H_{f,g}'.$$
We let $\xi', \chi'$ and $\phi'$ be the functions corresponding to $H'$. For all $(f,g,h)$ with $\pi(f)=\pi(g)=\pi(h)$ we have
$$H_{f,g}'+H_{g,h}'=-\xi'(f,g,h)+H_{f,h}'.$$
Hence,
$$\zeta(f,g)+H_{f,g}+\zeta(g,h)+H_{g,h}=\zeta(f,g)+H_{f,g}+\zeta(g,h)-H_{f,g}+H_{f,g}+H_{g,h}$$
$$=-\xi'(f,g,h)+\zeta(f,h)+H_{f,h}.$$
This gives
$$\zeta(f,h)+\xi(f,g,h)-\phi_{g,f}\zeta(g,h)-\zeta(f,g)=\xi'(f,g,h).$$
For $(x,y)\in\ka(i,j)$ and $(a,b)\in\ka(j,k)$ with $\pi(x)=\pi(y)$ and $\pi(a)=\pi(b)$, we have
$$a_*H'_{x,y}+y^*H'_{a,b}=-\chi'(x,y|a,b)+H'_{ax,by}.$$
So we get
$$a_*\zeta(x,y)+a_*H_{x,y}+y^*\zeta(a,b)+y^*H_{a,b}=a_*\zeta(x,y)+a_*\phi_{y,x}(y^*\zeta(a,b))-\chi(x,y|a,b)+H_{ax,by}$$
$$=-\chi'(x,y|a,b)+\zeta(ax,by)+H_{ax,by}.$$
Hence
$$\zeta(ax,by)+\chi(x,y|a,b)-x^*\zeta(a,b)-a_*\zeta(x,y)=\chi'(x,y|a,b).$$
We also have for all $(f,g)\in\ka$ with $\pi(f)=\pi(g)$ and all $t\in G_f$
$$\zeta(f,g)+\phi_{g,f}(t)-\zeta(f,g)=\phi'_{g,f}(t).$$

\

The inverse map is constructed in the following way. Let $(\xi,\chi,\phi)\in H^2(\pi,G)$.

Let $\pi(f)=\pi(g)$. Then we define the set of tracks $T(f,g)$ to be the set $G_f\times{f,g}$. The track category structure is given by
$$(\alpha, f,g)+(\beta, g,h)=(\alpha+\phi_{g,f}(\beta)-\xi(f,g,h), f,h).$$
$$a_*(\alpha,f,g)=(a_*\alpha-\chi(f,g|a,a),af,ag),$$
$$b^*(\alpha,f,g)=(b^*\alpha-\chi(b,b|f,g),fb,gb).$$
Let us check that this indeed gives a track structure.

Now, to verify the relations $TR1-TR9$: First, we check that the addition defined above is associative. We have
\begin{align*}
((\alpha, f,g)+(\beta, g,h))+(\gamma,h,e)&=(\alpha+\phi_{g,f}(\beta)-\xi(f,g,h), f,h)+(\gamma,h,e)\\
&=(\alpha+\phi_{g,f}(\beta)-\xi(f,g,h)+\phi_{h,f}(\gamma)-\xi(f,h,e),f,e)\\
&=(\alpha+\phi_{g,f}(\beta)+\phi_{g,f}\circ\phi_{h,g}(\gamma)-\xi(f,g,h)-\xi(f,h,e),f,e).\\
\end{align*}
On the other hand,
\begin{align*}
(\alpha, f,g)+((\beta, g,h)+(\gamma,h,e))&=(\alpha, f,g)+(\beta)+\phi_{h,g}(\gamma)-\xi(g,h,e),g,e)\\
&=(\alpha+\phi_{g,f}(\beta)+\phi_{g,f}\circ\phi_{h,g}(\gamma)-\phi_{g,f}(\xi(g,h,e))-\xi(f,g,e),f,e).\\
\end{align*}
Using Equation \ref{zzz} (\ref{xi}), we can see that we have equality.

\
Next, for $TR2$ we have:
$$(\alpha, f,g)+(0,g,g)=(\alpha+\phi_{g,f}(0)-\xi(f,g,g),f,g)=(\alpha,f,g)=(0,f,f)+(\alpha,f,g)$$.
We also have that
\begin{align*}
m^*((\alpha,f,g)+(\beta,g,h))&=m^*(\alpha+\phi_{g,f}(\beta)-\xi(f,g,h), f,h)\\
=(m^*(\alpha+\phi_{g,f}(\beta)&-\xi(f,g,h))-\chi(m,m|f,h),fm,hm)\\
=(m^*\alpha+m^*\phi_{g,f}(m^*\beta)&-m^*\xi(f,g,h)-\chi(m,m|f,h),fm,hm)\\
=(m^*\alpha-\chi(m,m|f,g)+\phi_{gm,fm}(m^*\beta)&+\chi(m,m|f,g)-m^*\xi(f,g,h)-\chi(m,m|f,h), fm,hm).\\
\end{align*}
According to Equation \ref{zzz} (\ref{xichi}), $-m^*\xi(f,g,h)-\chi(m,m|f,h)=-\chi(m,m|f,g)-\phi_{gm,fm}(\chi(m,m|g,h))-\xi(fm,gm,hm)$. Therefore, we have
\begin{align*}
=(m^*\alpha-\chi(m,m|f,g)+\phi_{gm,fm}(m^*\beta)&-\phi_{gm,fm}(\chi(m,m|g,h))-\xi(fm,gm,hm), fm,hm)\\
=m^*(\alpha,f,g)&+m^*(\beta,g,h).
\end{align*}

For $TR4$, we have very similarly:
\begin{align*}
x_*((\alpha,f,g)+(\beta,g,h))&=x_*(\alpha+\phi_{g,f}(\beta)-\xi(f,g,h), f,h)\\
=(x_*(\alpha+\phi_{g,f}(\beta)&-\xi(f,g,h))-\chi(f,h|x,x),xf,xh)\\
=(x_*\alpha+x_*\phi_{g,f}(x_*\beta)&-x_*\xi(f,g,h)-\chi(f,h|x,x),xf,xh)\\
=(x_*\alpha-\chi(f,g|x,x)+\phi_{xg,xf}(x_*\beta)&+\chi(f,g|x,x)-x_*\xi(f,g,h)-\chi(f,h|x,x),xf,xh).
\end{align*}
According to Equation \ref{zzz} (\ref{xichi}), $-x_*\xi(f,g,h)-\chi(f,h|x,x)=-\chi(f,g|x,x)-\phi_{xg,xf}(\chi(g,h|x,x))-\xi(xf,xg,xh)$. Therefore, we have
\begin{align*}
=(x_*\alpha-\chi(f,g|x,x)+\phi_{xg,xf}(x_*\beta)&-\phi_{xg,xf}(\chi(g,h|x,x))-\xi(xf,xg,xh),xf,xh)\\
=x_*(\alpha,f,g)&+x_*(\beta,g,h).
\end{align*}

For $TR5$, we have
$$f^*(0,g,g)=(f^*0-\chi(f,f|g,g),gf,gf)=(0,gf,gf)=(g_*0-\chi(f,f|g,g),gf,gf)=g_*(0,f,f).$$

Next,
$$m^*n^*(\alpha,f,g)=m^*(n^*\alpha-\chi(n,n|f,g),fn,gn)=(m^*n^*\alpha-m^*\chi(n,n|f,g)-\chi(m,m|fn,gn),fnm,gnm).$$
Using Equation \ref{zzz} (\ref{chi}), we get that $-m^*\chi(n,n|f,g)-\chi(m,m|fn,gn)=-\chi(nm,nm|f,g)$, therefore giving us
$$=(n^*m^*\alpha-\chi(nm,nm|f,g),fnm,gnm)=(nm)^*(\alpha,f,g).$$
$TR7$ can be shown in a very similar manner. For $TR8$, we have
\begin{align*}
x_*m^*(\alpha,f,g)&=x_*(m^*\alpha-\chi(m,m|f,g),fm,gm)\\
&=(x_*m^*\alpha-x_*\chi(m,m|f,g)-\chi(fm,gm|x,x),xfm,xgm)\\
&=(x_*m^*\alpha-m^*\chi(f,g|x,x)-\chi(m,m|xf,xg),xfm,xgm),\\
&=m^*x_*(\alpha,f,g),
\end{align*}
due to Equation \ref{zzz} (\ref{chi}).

We also check $TR9$, the last equation:
\begin{align*}
m_*(\alpha,a,b)+b^*(\mu,m,n)=&(m_*\alpha-\chi(a,b|m,m),ma,mb)+(b^*\mu-\chi(b,b|m,n),mb,nb)\\
=&(m_*\alpha-\chi(a,b|m,m)+\phi_{mb,ma}(b^*\mu-\chi(b,b|m,n))-\xi(ma,mb,nb),ma,nb)\\
=&(m_*\alpha-\chi(a,b|m,m)+\chi(a,b|m,m)+m_*\phi_{b,a}(b^*\mu)-\chi(a,b|m,m)\\
&-\phi_{mb,ma}(\chi(b,b|m,n))-\xi(ma,mb,nb),ma,nb).
\end{align*}
According to Equation \ref{zzz} (\ref{xichi}), this equals to
\begin{align*}
&=(m_*\alpha+m_*\phi_{b,a}(b^*\mu)-m_*\xi(a,b,b)-m_*\phi_{b,a}(b^*\xi(m,m,n))-\chi(a,b|m,n),ma,nb)\\
&=(m_*\alpha+m_*\phi_{b,a}(b^*\mu)-\chi(a,b|m,n),ma,nb)=(m_*\alpha+a^*\mu-\chi(a,b|m,n),ma,nb).
\end{align*}
On the other hand,
\begin{align*}
a^*(\mu,m,n)+n_*(\alpha,a,b)=&(a^*\mu-\chi(a,a|m,n),ma,na)+(n_*\alpha-\chi(a,b|n,n),na,nb)\\
=&(a^*\mu-\chi(a,a|m,n)+\phi_{na,ma}(n_*\alpha-\chi(a,b|n,n))-\xi(ma,na,nb),ma,nb)\\
=&(a^*\mu+a^*\phi_{n,m}(n_*\alpha)-\chi(a,a|m,n)-\phi_{na,ma}(\chi(a,b|n,n))-\xi(ma,na,nb),ma,nb).
\end{align*}
According to Equation \ref{zzz} (\ref{xichi}), this equals to
\begin{align*}
&=(a^*\mu+a^*\phi_{n,m}(n_*\alpha)-m_*\xi(a,a,z)-m_*\phi_{b,a}(b^*\xi(m,n,n))-\chi(a,b|m,n),ma,nb)\\
&=(a^*\mu+a^*\phi_{n,m}(n_*\alpha)-\chi(a,b|m,n),ma,nb)=(a^*\mu+m_*\alpha-\chi(a,b|m,n),ma,nb).
\end{align*}
As $G$ is a centralised natural system, we have $a^*\mu+m_*\alpha=m_*\alpha+a^*\mu$ in $G_{ma}$, therefore giving equality.
\end{proof}


\begin{thebibliography}{999999999}
\bigskip

\bibitem{bj} {\sc H.-J. Baues} and {\sc M. Jibladze}. {\it Classification of abelian track categories}. K-Theory 25(2002) 299-311. 
\bibitem{bw} {\sc H.-J. Baues} and {\sc G. C. Wirsching}. {\it Cohomology of small categories} J. Pure and Appl.Algebra 38(1985), 187-211.
\bibitem{mp} {\sc M. Pirashvili}. {\it Second cohomotopy and nonabelian cohomology}. 13(2014), 397-445.

%\bibitem{serre} {\sc J.-P. Serre}. Galois comology,  Springer-Verlag,  Berlin (1997).

\end{thebibliography}
\end{document}